\newcommand{\details}[1]{$\,$\\***DETAILS {\bf #1}***\\}
\renewcommand{\details}[1]{$\,$\\***DETAILS {\bf #1}***\\}
\renewcommand{\details}[1]{}
\newtheorem{theorem}{Theorem}[section]
\newtheorem{lemma}[theorem]{Lemma}
\newcommand{\be}{\begin{enumerate}}
\newcommand{\ee}{\end{enumerate}}
\newcommand{\beq}{\begin{equation}}
\newcommand{\eeq}{\end{equation}}
\newcommand {\bea} {\begin{eqnarray}}
\newcommand {\eea} {\end {eqnarray}}
\newcommand {\bua} {\begin{eqnarray*}}
\newcommand {\eua} {\end {eqnarray*}}
\newcommand{\se}{\subseteq}
\newcommand{\eps}{\varepsilon}
\def\N{{\mathbb N}}
\newcommand{\ds}{\displaystyle}
\newcommand{\limn}{\ds\lim_{n\to\infty}}
\newcommand{\lan}{\left\langle}
\newcommand{\ran}{\right\rangle}
\begin{document}

\title{A rate of asymptotic regularity for the Mann iteration of $\kappa$-strict pseudo-contractions}
\author{Daniel Ivan${}^{1,} \footnote{The research of Daniel Ivan was done while he was a member of the project  PN-II-ID-PCE-2011-3-0383 at  Simion Stoilow Institute of Mathematics of the Romanian Academy.} $ , Lauren\c{t}iu Leu\c{s}tean${}^{2,3}$,  \\[0.2cm]
\footnotesize ${}^1$ 17 Lumbervale Ave, Toronto M6H 1C7, Ontario, Canada\\
\footnotesize ${}^2$ Faculty of Mathematics and Computer Science, University of Bucharest,\\
\footnotesize Academiei 14,  P.O. Box 010014, Bucharest, Romania\\[0.1cm]
\footnotesize ${}^3$ Simion Stoilow Institute of Mathematics of the Romanian Academy, \\
\footnotesize P.O. Box 1-764, RO-014700, Bucharest, Romania\\[0.1cm]
\footnotesize E-mails:  idaniel.ivan@gmail.com, Laurentiu.Leustean@imar.ro
}

\date{}

\maketitle

\begin{abstract}
\noindent In this paper we apply methods of proof mining to obtain a uniform effective rate of asymptotic regularity for the Mann 
iteration associated to $\kappa$-strict pseudo-contractions on convex subsets of Hilbert spaces.\\

\noindent {\em MSC:}  47J25; 47H09;  03F10.\\

\noindent {\em Keywords:} Proof mining; Effective bounds; Asymptotic regularity; 
Strict pseudo-contractions; Mann iteration
\end{abstract}

\maketitle 

\pagenumbering{arabic}

\section{Introduction}

Let $H$ be a real Hilbert space, $C\se X$ a nonempty closed convex subset, $T:C\to C$ be a mapping and $0\leq \kappa < 1$. $T$ is said to be a 
\emph{$\kappa$-strict pseudo-contraction} if for all $x,y\in C$,
\beq 
 \|Tx - Ty\|^2 \leq \|x-y\|^2 + \kappa \|x - Tx- \left(y -T y \right)\|^2. \label{def-k-strict-pseudo-contraction}
\eeq
This class of nonlinear mappings was introduced in the 60's by Browder and Petryshyn \cite{BroPet67}. 
Nonexpansive mappings  coincide with  $0$-strict pseudo-contrac-tions.

The Mann iteration \cite{Kra55,Man53,Gro72} starting with $x\in C$ is defined by 
\beq
x_0:=x, \quad x_{n+1}:=(1-\lambda_n)x_n+\lambda_nTx_n,
\eeq
where $(\lambda_n)$ is  a sequence in $(0,1)$. By letting $\lambda_n=\lambda$ for all $n\in\N$, 
we get the Krasnoselskii iteration \cite{Kra55} as  a special case. In the sequel, we consider a sequence
$(\lambda_n)$ satisfying the following conditions:
\beq 
\kappa< \lambda_n < 1 \text{ for all }n\in\N \text{~~and~~}  \sum_{n=0}^\infty (\lambda_n - \kappa)(1-\lambda_n) = \infty. \label{lambdan-cond}
\eeq
Assuming that $T$ has fixed points and $(\lambda_n)$ satisfies \eqref{lambdan-cond}, 
Marino and Xu \cite{MarXu07} proved the weak convergence of the Mann iteration 
$(x_n)$ to a fixed point of $T$. Their result generalizes the one obtained by Browder and 
Petryshyn for the Krasnoselskii iteration. 
Furthermore, as an immediate consequence one gets Reich's result \cite{Rei79} for nonexpansive 
mappings in Hilbert spaces.

As it is the case with many results on the weak or strong convergence of nonlinear iterations, 
the first step in their proofs consists in getting the {\em asymptotic regularity} of $(x_n)$, 
i.e. the fact that $\ds \limn \|x_n-Tx_n\|=0$ for all starting points $x\in C$. This is a very important 
property, introduced by Browder and Petryshyn \cite{BroPet66} for the Picard iteration $x_n=T^nx$. 
The following result is implicit in \cite{MarXu07}:

\begin{theorem} \label{thm:MX07:result}
Let $C$ be a convex subset of a Hilbert space $H$, $T:C\to C$ be a $\kappa$-strict 
pseudo-contraction such that $T$ has fixed points. Assume that $(\lambda_n)$ satisfies \eqref{lambdan-cond}.
Then  $\limn \|x_n-Tx_n\|=0$ for all $x\in C$.
\end{theorem}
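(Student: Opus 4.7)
The plan is a Fejér-monotonicity argument around any fixed point $p$ of $T$ (whose existence is assumed), combined with a telescoping summation and a residual-monotonicity step. The first task is to derive a key one-step inequality of the form
\[
\|x_{n+1}-p\|^2 \leq \|x_n-p\|^2 - c_n\|x_n-Tx_n\|^2,
\]
with a nonnegative coefficient $c_n$ comparable to $(\lambda_n-\kappa)(1-\lambda_n)$. Two ingredients suffice: the Hilbert-space convex-combination identity $\|(1-t)a+tb\|^2 = (1-t)\|a\|^2+t\|b\|^2-t(1-t)\|a-b\|^2$ applied with $t=\lambda_n$, $a=x_n-p$, $b=Tx_n-p$, producing the term $-\lambda_n(1-\lambda_n)\|x_n-Tx_n\|^2$; and the specialization of \eqref{def-k-strict-pseudo-contraction} to $y=p$, giving $\|Tx_n-p\|^2 \leq \|x_n-p\|^2+\kappa\|x_n-Tx_n\|^2$. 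Substituting and simplifying yields the inequality, from which two consequences follow: $(\|x_n-p\|)$ is nonincreasing (so $(x_n)$ is bounded and $(\|x_n-p\|)$ converges), and telescoping gives $\sum_{n=0}^{\infty} c_n\|x_n-Tx_n\|^2 \leq \|x_0-p\|^2 < \infty$.

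The next step is to upgrade this summability to the desired convergence $\|x_n-Tx_n\|\to 0$; the standard device is to show that $(\|x_n-Tx_n\|)$ is itself nonincreasing. I would do this by passing to the auxiliary map $S_\kappa := \kappa I + (1-\kappa)T$, which is nonexpansive precisely because $T$ is $\kappa$-strictly pseudo-contractive (a well-known equivalence, obtained by expanding $\|S_\kappa x - S_\kappa y\|^2$ via the same convex-combination identity and invoking \eqref{def-k-strict-pseudo-contraction}). The Mann iteration for $T$ may be rewritten as a Mann iteration for $S_\kappa$ with stepsize $\beta_n := \lambda_n/(1-\kappa)$. In the nonexpansive setting, the decomposition $x_{n+1} - S_\kappa x_{n+1} = (1-\beta_n)(x_n - S_\kappa x_{n+1}) + \beta_n(S_\kappa x_n - S_\kappa x_{n+1})$ combined with nonexpansiveness of $S_\kappa$ and the triangle inequality delivers the classical monotonicity $\|x_{n+1}-S_\kappa x_{n+1}\| \leq \|x_n - S_\kappa x_n\|$. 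Since $x - S_\kappa x = (1-\kappa)(x-Tx)$ for every $x$, this transfers directly to $(\|x_n-Tx_n\|)$.

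The conclusion is then routine: $(\|x_n-Tx_n\|)$ is nonincreasing and nonnegative, hence converges to some $L\geq 0$; if $L>0$ then $c_n\|x_n-Tx_n\|^2\geq L^2 c_n$ and so $\sum c_n\|x_n-Tx_n\|^2 \geq L^2\sum_{n=0}^\infty c_n = +\infty$ by \eqref{lambdan-cond}, contradicting the telescoping bound. Hence $L=0$, which is exactly asymptotic regularity.

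The main technical obstacle is the precise algebraic bookkeeping needed to match the coefficient $c_n$ to the divergence hypothesis in \eqref{lambdan-cond}: a direct substitution of the two ingredients above yields the nearby coefficient $\lambda_n(1-\kappa-\lambda_n)$, which is only nonnegative under $\lambda_n<1-\kappa$. Extracting exactly $(\lambda_n-\kappa)(1-\lambda_n)$ may call for a more refined manipulation, e.g.\ working with the equivalent inner-product form $\langle x_n-p,\,x_n-Tx_n\rangle \geq \tfrac{1-\kappa}{2}\|x_n-Tx_n\|^2$ derived from \eqref{def-k-strict-pseudo-contraction}. A secondary bookkeeping point is that the reduction to $S_\kappa$ imposes $\beta_n = \lambda_n/(1-\kappa) \in (0,1)$, i.e.\ $\lambda_n<1-\kappa$, which must be squared with the stated bound $\lambda_n<1$.
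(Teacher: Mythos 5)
Your proposal is correct and follows essentially the same route as the paper: the one-step Fej\'er inequality around a fixed point $p$ is exactly the paper's \eqref{xn1p-xnp-xnTxn} (the case $y=p$ of Lemma \ref{lemma-ineq}), the monotonicity of $(\|x_n-Tx_n\|)$ is the paper's Lemma \ref{xnTxn-nonincreasing}, and the telescoping-plus-divergence step is the argument of Section \ref{proof-main-quant} specialized to an exact fixed point. Two remarks. First, the ``main technical obstacle'' you flag is not a mathematical obstacle but a convention clash: the paper's displayed definition of the Mann iteration puts the weight $\lambda_n$ on $Tx_n$, whereas its own computations (and Marino--Xu's) use $x_{n+1}=\lambda_n x_n+(1-\lambda_n)Tx_n$. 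Under the latter convention the convex-combination identity puts the weight $1-\lambda_n$ on $\|Tx_n-p\|^2$, and direct substitution of $\|Tx_n-p\|^2\le\|x_n-p\|^2+\kappa\|x_n-Tx_n\|^2$ yields precisely the coefficient $(\lambda_n-\kappa)(1-\lambda_n)$; no refined inner-product manipulation is needed. Your coefficient $\lambda_n(1-\kappa-\lambda_n)$ is what the other convention produces, for which the natural hypothesis would be $0<\lambda_n<1-\kappa$ rather than \eqref{lambdan-cond}. The same observation disposes of your secondary point: with $\lambda_n$ weighting $x_n$, the induced stepsize for $S_\kappa$ is $\beta_n=(1-\lambda_n)/(1-\kappa)$, which lies in $(0,1)$ exactly because $\kappa<\lambda_n<1$. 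Second, your derivation of the residual monotonicity via the nonexpansive transform $S_\kappa=\kappa I+(1-\kappa)T$ and the classical Ishikawa-type estimate is a correct, self-contained substitute for Lemma \ref{xnTxn-nonincreasing}, which the paper imports from \cite{MarXu07} without proof; that is the one place where your argument adds something beyond the paper's.
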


In this paper we  apply proof mining methods to obtain a finitary, quantitative version of 
a generalization of Theorem \ref{thm:MX07:result}, computing a uniform rate of asymptotic regularity for 
the Mann iteration $(x_n)$, i.e. a rate of convergence of $(\|x_n-Tx_n\|)$ towards $0$. 
The fact that we can get such a result is guaranteed by logical metatheorems for Hilbert spaces
proved by Kohlenbach \cite{Koh05}. Moreover, as an immediate consequence of our main result, we obtain a 
quadratic rate of asymptotic regularity for the Krasnoselskii iteration.

\section{Main result}

Given $x\in X$ and $b,\delta>0$, we use the notation
\[Fix_{\delta}(T,x,b)=\{y\in C\mid \|x-y\|\le b \quad \text{and} \quad \|y-Ty\| <\delta\}\]
and we say that $T$ has {\em  approximate fixed points} in a $b$-neighborhood of $x$ if $Fix_{\delta}(T,x,b)\ne \emptyset$ for all $\delta>0$. 
If $T$ has a fixed point $p\in C$, then for all $x\in C$ and any $b\geq d(x,p)$, we have that $p\in Fix_{\delta}(T,x,b)$ for all $\delta>0$.

Let us recall that a rate of divergence for a divergent series  $\ds\sum_{n=0}^\infty a_n$ is a mapping $\theta:\N\to\N$  satisfying 
$\ds\sum_{k=0}^{\theta(n)}a_k\geq n$ for all $n\in\N$. \\

The main result of this paper is the following finitary, quantitative version of a generalization of Theorem \ref{thm:MX07:result}, where the hypothesis of $T$ having fixed points is weakened to the one that $T$ has approximate fixed points in a $b$-neighborhood of $x$ for some $x\in C$ and $b>0$.

\begin{theorem}\label{main-quant}
Let $H$ be a Hilbert space, $C\se H$ a nonempty convex subset and 
$T:C \to C$ be a $\kappa$-strict pseudo-contraction, where $0 \leq \kappa < 1$.
Assume that $(\lambda_n)$  is a sequence in $(\kappa,1)$  satisfying 
$\ds \sum_{n=0}^\infty (\lambda_n -\kappa)(1-\lambda_n) = \infty$ with rate of divergence $\theta:\N\to\N$.
Let $x\in C,b>0$ be such that $\|x-Tx\|\leq b$ and $T$ has approximate fixed points in a $b$-neighborhood of $x$.

Then  $\limn \|x_n-Tx_n\|=0$ and 
\beq
\forall \eps>0\forall n\ge \Phi(\varepsilon,b,\theta)\big(\|x_n-Tx_n\|<\eps\big), \quad \text{where}\quad \Phi(\eps,b,\theta)=\theta\left(\left\lceil\frac{b^2}{\eps^2}\right\rceil \right).\label{def-Phi-main-quant}
\eeq
\end{theorem}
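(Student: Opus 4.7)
The plan is to quantify the Marino--Xu convergence argument, relaxing the use of a true fixed point to an approximate one. Two ingredients drive the proof: an approximate Fej\'er-type inequality whose ``error'' vanishes as $\|y-Ty\|\to 0$, and monotonicity of the sequence $(\|x_n-Tx_n\|)_n$. For the first, I would derive, for any $y\in C$, a one-step estimate of the form
\[
\|x_{n+1}-y\|^2 \leq \|x_n-y\|^2 - (\lambda_n-\kappa)(1-\lambda_n)\,\|x_n-Tx_n\|^2 + \rho_n\bigl(\|y-Ty\|\bigr),
\]
with $\rho_n$ an explicit function vanishing when $\|y-Ty\|=0$. This is obtained by combining the Hilbert convex-combination identity for $\|x_{n+1}-y\|^2$ with the defining inequality of the $\kappa$-strict pseudo-contraction applied at $(x_n,y)$, then using the triangle inequality to peel off the $\|y-Ty\|$-contributions from $\|Tx_n-y\|$ and $\|(I-T)x_n-(I-T)y\|$.

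For the second, setting $u_n := x_n-Tx_n$, the Mann recursion expresses $u_{n+1}$ as a linear combination of $u_n$ and $Tx_{n+1}-Tx_n$; inserting this into the defining $\kappa$-strict-pseudo-contraction inequality applied at $(x_n,x_{n+1})$ and using $2\langle u_n,u_{n+1}\rangle \leq \|u_n\|^2 + \|u_{n+1}\|^2$ yields $\|u_{n+1}\|\leq\|u_n\|$, under the hypothesis on $(\lambda_n)$ in \eqref{lambdan-cond}.

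Given these, fix $\varepsilon>0$ and set $N := \lceil b^2/\varepsilon^2\rceil$ and $N^* := \theta(N)$. By monotonicity it suffices to produce some $k\leq N^*$ with $\|x_k-Tx_k\|<\varepsilon$. Suppose for contradiction that $\|x_k-Tx_k\|\geq\varepsilon$ for all such $k$, and pick $y\in Fix_\delta(T,x,b)$ with $\delta>0$ to be chosen, using the approximate-fixed-point hypothesis. Telescoping the Fej\'er inequality from $k=0$ to $N^*$ and invoking the rate of divergence $\sum_{k=0}^{N^*}(\lambda_k-\kappa)(1-\lambda_k)\geq N$ gives
\[
0\leq\|x_{N^*+1}-y\|^2 \leq b^2 - \varepsilon^2 N + (N^*+1)\,\sup_{k\leq N^*}\rho_k(\delta).
\]
Since $\varepsilon^2 N\geq b^2$, choosing $\delta$ small enough (in terms of $N^*,\varepsilon,b$) makes the right-hand side negative, a contradiction; hence some $k\leq N^*$ satisfies $\|x_k-Tx_k\|<\varepsilon$, and by monotonicity the same holds for all $n\geq N^* = \Phi(\varepsilon,b,\theta)$.

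The most delicate point is controlling $\rho_n(\delta)$ uniformly in $n\leq N^*$: its precise form involves quantities like $\|x_n-y\|$ and $\|x_n-Tx_n\|$, and one needs uniform a priori bounds for these that are independent of the choice of approximate fixed point $y$. Monotonicity together with $\|x-Tx\|\leq b$ gives $\|x_n-Tx_n\|\leq b$, while iterating the Fej\'er inequality (with $\delta$ already suitably small) bounds $\|x_n-y\|$ by $b$ plus a small residual. This is what allows $\delta$ to be chosen as a function of $N^*,b,\varepsilon$ alone, so that the final rate $\Phi$ depends only on $\varepsilon,b,\theta$ as claimed.
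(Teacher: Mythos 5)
Your proposal follows essentially the same route as the paper: an approximate Fej\'er-type one-step inequality whose error is controlled by $\|y-Ty\|$ (the paper's Lemma \ref{lemma-ineq}), monotonicity of $(\|x_n-Tx_n\|)$ (Lemma \ref{xnTxn-nonincreasing}, quoted from Marino--Xu), a telescoping sum up to $\theta(\lceil b^2/\eps^2\rceil)$, the rate of divergence, and a proof by contradiction. Two remarks on the points where you diverge. First, for the uniform control of the error term the paper does not bootstrap through the Fej\'er inequality; it uses the cruder a priori bound $\|x_n-y\|\le (n+1)b$, proved by induction from the triangle inequality and $\|x_n-Tx_n\|\le\|x-Tx\|\le b$ (Lemma \ref{afp-b-upper-bound}). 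This is independent of $\delta$ and so removes the apparent circularity in your choice of $\delta$; your bootstrap can be made to work, but it needs an explicit induction to avoid assuming what is being proved. Second, your endgame does not quite close as written: under the hypothesis $\|x_k-Tx_k\|\ge\eps$ you only obtain $b^2-\eps^2N\le 0$, and since the error term $(N^*+1)\sup_k\rho_k(\delta)$ is strictly positive for any admissible $y$, the right-hand side of your final display cannot be made negative in the boundary case $\eps^2\lceil b^2/\eps^2\rceil=b^2$. The paper avoids this by splitting the argument: it first proves the exact bound $\sum_{n=0}^{\Phi}(\lambda_n-\kappa)(1-\lambda_n)\|x_n-Tx_n\|^2\le b^2$ by showing the sum is at most $b^2+\sigma$ for every $\sigma>0$ (legitimate because the sum does not depend on the approximate fixed point $y$ or on $\delta$), and only then derives the strict inequality $\sum>b^2$ from the contradiction hypothesis $\|x_n-Tx_n\|>\eps$. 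You should restructure your final step in this way, or otherwise build in a strict margin.
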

\begin{proof} See Section \ref{proof-main-quant}.\end{proof}

Browder and Petryshyn proved \cite{BroPet67} that if $C$ is bounded, then $T$ has fixed points. Hence, by letting $b$ to be an upper bound for the diameter $d_C$ of $C$, we get that $Fix_{\delta}(T,x,b)\ne \emptyset$ for all $x\in C$. As a consequence of our main theorem, for bounded $C$, the Mann iteration $(x_n)$ is asymptotically regular with rate of asymptotic regularity $\Phi$ given by \eqref{def-Phi-main-quant}, where $b\geq d_C$.

Furthermore, if $\lambda_n=\lambda\in (\kappa,1)$ then one can easily verify that 
$$\ds \theta(n):=\left\lceil\frac{1}{(\lambda-\kappa)(1-\lambda)}\right\rceil n$$ 
is a rate of divergence for the series $\ds \sum_{n=0}^\infty (\lambda -\kappa)(1-\lambda) = \infty$ . Hence, we get in this case that 
\beq
\Phi(\eps,b,\lambda,\kappa)=\left\lceil\frac{1}{(\lambda-\kappa)(1-\lambda)}\right\rceil\left\lceil\frac{b^2}{\eps^2}\right\rceil
\eeq
is a quadratic in $1/\eps$ rate of asymptotic regularity for the Krasnoselskii iteration $(x_n)$.\\

Since $0$-strict pseudo-contractions coincide with nonexpansive mappings, our results generalize with slightly changed bounds the ones obtained  by Kohlenbach \cite{Koh03} for the Mann iteration, and Browder and Petryshyn \cite{BroPet67} for the Krasnoselskii iteration associated to nonexpansive mappings in Hilbert spaces. 
We point out that in \cite{Koh03}, Kohlenbach computes, applying also proof mining methods, rates of asymptotic regularity for the Mann iteration of a nonexpansive mapping in the more general class of uniformly convex Banach spaces, generalized further by the second author to a class of uniformly convex geodesic spaces \cite{Leu07}.

\section{Some useful lemmas}

In the sequel, $H$ is a Hilbert space, $C\se H$ is a nonempty convex subset and 
$T: C \to C$ is a $\kappa$-strict pseudo-contraction. Furthermore, $(\lambda_n)$ 
is a sequence in $(0,1)$ and $(x_n)$ is the Mann iteration starting with $x\in C$, defined 
by \eqref{def-k-strict-pseudo-contraction}.

The following identities in Hilbert spaces will be used in the sequel.

\begin{lemma}\label{lem:props:norm}
Let $x,y \in H$ and $t\in [0,1]$. Then
\begin{align*}
 \|x+y\|^2=\|x\|^2+\|y\|^2+2\lan x,y\ran \text{~and~}   \|x-y\|^2=\|x\|^2+\|y\|^2-2\lan x,y\ran\\
\|tx+(1-t)y\|^2 = t\|x\|^2 + (1-t)\|y\|^2 - t (1-t)\|x-y\|^2. 
\end{align*}
\end{lemma}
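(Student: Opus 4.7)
The plan is to derive all three identities directly from the definition $\|v\|^2=\langle v,v\rangle$ together with the bilinearity and symmetry of the inner product; no nontrivial Hilbert space machinery is required, so this is essentially a bookkeeping exercise.

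First I would treat the two parallelogram-style identities simultaneously. Writing $\|x\pm y\|^2=\langle x\pm y,x\pm y\rangle$ and expanding by bilinearity gives $\langle x,x\rangle \pm \langle x,y\rangle \pm \langle y,x\rangle + \langle y,y\rangle$, and by symmetry $\langle x,y\rangle=\langle y,x\rangle$ this collapses to $\|x\|^2+\|y\|^2\pm 2\langle x,y\rangle$, yielding both stated identities at once.

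For the third identity I would use the first one with $tx$ and $(1-t)y$ in place of $x$ and $y$, obtaining
\[
\|tx+(1-t)y\|^2 = t^2\|x\|^2 + (1-t)^2\|y\|^2 + 2t(1-t)\langle x,y\rangle.
\]
Then I would rewrite $t^2=t-t(1-t)$ and $(1-t)^2=(1-t)-t(1-t)$ and group the $t(1-t)$ terms, so the right-hand side becomes $t\|x\|^2+(1-t)\|y\|^2 - t(1-t)\bigl(\|x\|^2+\|y\|^2-2\langle x,y\rangle\bigr)$. Applying the second identity of the lemma to the bracketed expression identifies it as $\|x-y\|^2$, finishing the proof.

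There is no genuine obstacle here; the only point that requires a little care is the algebraic rearrangement in the third identity, where one must recognize that the combination $\|x\|^2+\|y\|^2-2\langle x,y\rangle$ produced by the substitutions $t^2=t-t(1-t)$ and $(1-t)^2=(1-t)-t(1-t)$ is exactly $\|x-y\|^2$ from the identity just proved, which is what allows the expression to be stated without any explicit reference to the inner product.
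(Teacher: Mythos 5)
Your proof is correct: all three identities follow from expanding $\|v\|^2=\lan v,v\ran$ by bilinearity and symmetry, and your algebraic regrouping via $t^2=t-t(1-t)$ and $(1-t)^2=(1-t)-t(1-t)$ for the third identity is exactly the standard computation. The paper states this lemma without proof, treating it as a well-known fact, so there is nothing to compare against beyond noting that your argument is the canonical one.
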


\begin{lemma}\label{lemma-ineq}
\be
\item\label{ineq-main-Tzy} For all $y,z\in C$,
\[
 \|Tz - y\|^2 \leq \|z-y\|^2 +\kappa\|z-Tz\|^2+(\kappa+1)\|y-Ty\|^2 +2\|z-Ty\|\|y-Ty\|. 
 \]
\item\label{xn1y-xny-xnTxn} For all $y\in C$ and all $n\geq 0$
\begin{align*}
 \|x_{n+1} - y\|^2 \leq&  \|x_n-y\|^2-(\lambda_n-\kappa) (1-\lambda_n)\|x_n - Tx_n\|^2\\
 & + 2\|y-Ty\|(\|x_n-y\|+2\|y-Ty\|).
\end{align*}
\ee
\end{lemma}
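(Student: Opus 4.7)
The plan is to prove the two inequalities in turn, using Lemma \ref{lem:props:norm} and the $\kappa$-strict pseudo-contraction inequality \eqref{def-k-strict-pseudo-contraction} as the main tools.

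For part \ref{ineq-main-Tzy}, I would begin by writing $Tz - y = (Tz - Ty) + (Ty - y)$ and expanding via the first identity of Lemma \ref{lem:props:norm}:
$$\|Tz - y\|^2 = \|Tz - Ty\|^2 + 2\langle Tz - Ty, Ty - y\rangle + \|Ty - y\|^2.$$
The pseudo-contraction property bounds $\|Tz - Ty\|^2$ above by $\|z - y\|^2 + \kappa\|(z - Tz) - (y - Ty)\|^2$, and expanding the latter via the second identity of Lemma \ref{lem:props:norm} produces $\kappa\|z - Tz\|^2 + \kappa\|y - Ty\|^2 - 2\kappa\langle z - Tz, y - Ty\rangle$. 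The strategy is then to combine the two inner-product cross terms using the algebraic identity $\kappa(z - Tz) + (Tz - Ty) = (z - Ty) + (1 - \kappa)(Tz - z)$, so that a single Cauchy-Schwarz application against $y - Ty$ extracts the desired $\|z - Ty\|\|y - Ty\|$ factor of the target, with the residual $(1 - \kappa)$ contribution absorbed by triangle-inequality estimates.

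For part \ref{xn1y-xny-xnTxn}, since $x_{n+1} - y = (1 - \lambda_n)(x_n - y) + \lambda_n(Tx_n - y)$, the third identity of Lemma \ref{lem:props:norm} (applied with $t = 1-\lambda_n$) yields
$$\|x_{n+1} - y\|^2 = (1 - \lambda_n)\|x_n - y\|^2 + \lambda_n\|Tx_n - y\|^2 - \lambda_n(1 - \lambda_n)\|x_n - Tx_n\|^2.$$
I would then apply part \ref{ineq-main-Tzy} with $z = x_n$, use the triangle estimate $\|x_n - Ty\| \leq \|x_n - y\| + \|y - Ty\|$ to rewrite the $\|z - Ty\|$ factor, and collect like terms. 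The $\|x_n - Tx_n\|^2$ coefficients rearrange to $-(\lambda_n - \kappa)(1 - \lambda_n)$, matching the factor appearing in the divergence hypothesis of the main theorem, while the $\|y - Ty\|$ contributions assemble into $2\|y - Ty\|(\|x_n - y\| + 2\|y - Ty\|)$ after bounding $\lambda_n \leq 1$ and $\kappa + 1 \leq 2$.

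The main obstacle is the precise regrouping of cross inner-product terms in part \ref{ineq-main-Tzy}: the contributions from the outer expansion and from the pseudo-contraction expansion must be paired so that Cauchy-Schwarz against $y - Ty$ produces exactly $\|z - Ty\|$ in the norm product rather than $\|z - y\|$ or $\|Tz - y\|$; the identity for $\kappa(z-Tz) + (Tz-Ty)$ is the lynchpin. In part \ref{xn1y-xny-xnTxn} the only subtlety is verifying that the combination of $\lambda_n \kappa$ (from (i)) and $-\lambda_n(1 - \lambda_n)$ (from the convex identity), together with auxiliary $\kappa$-dependent terms, rearranges cleanly to yield the factor $-(\lambda_n - \kappa)(1 - \lambda_n)$.
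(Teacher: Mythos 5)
Your bookkeeping for part (\ref{ineq-main-Tzy}) is the right one, and it is precisely at the step you describe as ``absorbed by triangle-inequality estimates'' that the argument cannot be completed. After expanding $\|Tz-y\|^2=\|Tz-Ty\|^2+\|Ty-y\|^2+2\lan Tz-Ty,Ty-y\ran$, applying \eqref{def-k-strict-pseudo-contraction} and expanding $\kappa\|(z-Tz)-(y-Ty)\|^2$, the two cross terms combine --- exactly via your identity $\kappa(z-Tz)+(Tz-Ty)=(z-Ty)+(1-\kappa)(Tz-z)$ --- into $2\lan z-Ty,Ty-y\ran+2(1-\kappa)\lan Tz-z,Ty-y\ran$. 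Cauchy--Schwarz on the first summand gives the wanted $2\|z-Ty\|\,\|y-Ty\|$, but the residual is only bounded by $2(1-\kappa)\|z-Tz\|\,\|y-Ty\|$, and nothing on the stated right-hand side dominates a product of $\|z-Tz\|$ with $\|y-Ty\|$ (for $\kappa=0$ there is not even a $\|z-Tz\|^2$ term available). In fact the inequality of part (\ref{ineq-main-Tzy}) as stated is false: take $H=C=\mathbb{R}$, $Tx=x-1$ (an isometry, hence a $0$-strict pseudo-contraction), $z=0$, $y=10$; the left-hand side is $121$ while the right-hand side is $100+1+18=119$. So no triangle-inequality absorption can succeed. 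For what it is worth, the paper's own proof reaches the clean cross term $2\lan z-Ty,Ty-y\ran$ only by expanding $\kappa\|(z-Tz)-(y-Ty)\|^2$ with cross term $-2\lan z-Tz,y-Ty\ran$ instead of $-2\kappa\lan z-Tz,y-Ty\ran$; your more careful accounting exposes that slip. The repair is to keep your residual and add the summand $2(1-\kappa)\|z-Tz\|\,\|y-Ty\|$ to the right-hand side of (\ref{ineq-main-Tzy}); this is harmless downstream, since in every application $z=x_n$ and $\|x_n-Tx_n\|\le b$ by Lemma \ref{xnTxn-nonincreasing}, so only the constants in the final rate $\Phi$ change.

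Part (\ref{xn1y-xny-xnTxn}) has a second, independent problem. With the weighting you use, $x_{n+1}-y=(1-\lambda_n)(x_n-y)+\lambda_n(Tx_n-y)$, the coefficient of $\|x_n-Tx_n\|^2$ produced by combining the convexity identity with part (\ref{ineq-main-Tzy}) is $\lambda_n\kappa-\lambda_n(1-\lambda_n)=-\lambda_n(1-\lambda_n-\kappa)$, which differs from $-(\lambda_n-\kappa)(1-\lambda_n)$ by $\kappa(2\lambda_n-1)$ and does not ``rearrange'' to it unless $\kappa=0$ or $\lambda_n=1/2$. To obtain the stated coefficient one must take $x_{n+1}=\lambda_n x_n+(1-\lambda_n)Tx_n$, which is the convention actually used in the paper's proof of (\ref{xn1y-xny-xnTxn}) (and in Marino--Xu) but conflicts with the displayed definition of the Mann iteration in the introduction; the inconsistency originates in the paper, but your assertion that the coefficients come out as $-(\lambda_n-\kappa)(1-\lambda_n)$ under your weighting is not correct as written. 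The remainder of your plan for (\ref{xn1y-xny-xnTxn}) --- apply (\ref{ineq-main-Tzy}) with $z=x_n$, use $\|x_n-Ty\|\le\|x_n-y\|+\|y-Ty\|$, and bound $1-\lambda_n\le 1$, $\kappa+1\le 2$ --- matches the paper and goes through once these two points are fixed.
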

\begin{proof}
\be
\item 
\begin{align*}
 \|Tz - y\|^2 =& \|(Tz-Ty)+(Ty-y)\|^2 \\
 = & \|Tz-Ty\|^2 + \|Ty-y\|^2+2\lan Tz-Ty,Ty-y\ran\\
\leq & \|z-y\|^2 +\kappa\|(z-Tz)-(y-Ty)\|^2+ \|Ty-y\|^2\\
& +2\lan Tz-Ty,Ty-y\ran\\
=& \|z-y\|^2 +\kappa\|z-Tz\|^2+(\kappa+1)\|y-Ty\|^2-2\lan z-Tz,y-Ty\ran\\
& +2\lan Tz-Ty,Ty-y\ran\\
=&  \|z-y\|^2 +\kappa\|z-Tz\|^2+(\kappa+1)\|y-Ty\|^2 +2\lan z-Ty,Ty-y\ran\\
\leq &  \|z-y\|^2 +\kappa\|z-Tz\|^2+(\kappa+1)\|y-Ty\|^2 +2\|z-Ty\|\|y-Ty\|
\end{align*}
\item  
\begin{align*}
\|x_{n+1} - y\|^2  =& \|\lambda_n x_n \! +\! (1-\lambda_n) Tx_n -y\|^2 \!=\!\|\lambda_n (x_n  - y) \!+ \!(1-\lambda_n) (Tx_n - y) \|^2\\
=& \lambda_n \|x_n  - y\|^2 +(1-\lambda_n) \|Tx_n -y\|^2  - \lambda_n (1-\lambda_n)\|x_n - Tx_n\|^2 \\
\leq &  \lambda_n \|x_n  - y\|^2 +(1-\lambda_n)\|x_n  - y\|^2+(1-\lambda_n)\kappa \|x_n - Tx_n\|^2\\
& +(1-\lambda_n)(\kappa+1)\|y-Ty\|^2 + 2(1-\lambda_n)\|x_n-Ty\|\|y-Ty\| \\
&  -\lambda_n (1-\lambda_n)\|x_n - Tx_n\|^2 \quad \text{ by \eqref{ineq-main-Tzy}}\\
=& \|x_n-y\|^2-(\lambda_n-\kappa) (1-\lambda_n)\|x_n - Tx_n\|^2\\
&+ (1-\lambda_n)(\kappa+1)\|y-Ty\|^2+2(1-\lambda_n)\|x_n-Ty\|\|y-Ty\|\\
\leq & \|x_n-y\|^2-(\lambda_n-\kappa) (1-\lambda_n)\|x_n - Tx_n\|^2\\
&+2\|y-Ty\|(\|x_n-y\|+2\|y-Ty\|),
\end{align*}
since $\|x_n-Ty\|\leq \|x_n-y\|+\|y-Ty\|$. 
\ee
\end{proof}

In particular, if $p$ is a fixed point of $T$, then for all $n\ge 0$, 
\beq
 \|x_{n+1} - p\|^2 \leq \|x_n - p\|^2-(\lambda_n-\kappa) (1-\lambda_n)\|x_n - Tx_n\|^2. \label{xn1p-xnp-xnTxn}
\eeq

A very important property of the Mann iteration is the following one

\begin{lemma}\cite{MarXu07}\label{xnTxn-nonincreasing}
The sequence $(\|x_n - Tx_n\|)$ is nonincreasing. 
\end{lemma}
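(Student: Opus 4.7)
I would prove the lemma by comparing $u_n := x_n - Tx_n$ with $u_{n+1}$ via the defining $\kappa$-strict pseudo-contraction inequality \eqref{def-k-strict-pseudo-contraction} applied to the consecutive iterates $(x_{n+1}, x_n)$:
\[
\|Tx_{n+1} - Tx_n\|^2 \;\le\; \|x_{n+1} - x_n\|^2 + \kappa\,\|u_{n+1} - u_n\|^2.
\]
First I would rewrite both quantities on the right-hand side in terms of $u_n$ and $u_{n+1}$. Using the Mann recursion in the convention consistent with the expansion in the proof of Lemma~\ref{lemma-ineq}(ii), i.e.\ $x_{n+1} = \lambda_n x_n + (1-\lambda_n)Tx_n$, one has $x_{n+1} - x_n = -(1-\lambda_n)\,u_n$, and hence $Tx_{n+1} - Tx_n = (x_{n+1} - u_{n+1}) - (x_n - u_n) = \lambda_n u_n - u_{n+1}$. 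Both sides of the displayed inequality thus become simple affine combinations of $u_n$ and $u_{n+1}$.

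Substituting and expanding every squared norm with the polarisation identities of Lemma~\ref{lem:props:norm}, then collecting the $\|u_n\|^2$, $\|u_{n+1}\|^2$ and $\langle u_n, u_{n+1}\rangle$ contributions, the pseudo-contraction inequality simplifies to
\[
(1-\kappa)\|u_{n+1}\|^2 \;\le\; (1 - 2\lambda_n + \kappa)\,\|u_n\|^2 \;+\; 2(\lambda_n - \kappa)\,\langle u_n, u_{n+1}\rangle.
\]
Since $\lambda_n > \kappa$, the coefficient of the inner product is strictly positive, so Cauchy--Schwarz $\langle u_n, u_{n+1}\rangle \le \|u_n\|\,\|u_{n+1}\|$ applies in the useful direction and produces a quadratic inequality in $t = \|u_{n+1}\|$ (with parameter $\|u_n\|$) having positive leading coefficient $1-\kappa$.

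The decisive observation, and the only non-routine step, is that $t = \|u_n\|$ is automatically a root of this quadratic: direct substitution collapses $(1-\kappa) - 2(\lambda_n - \kappa) - (1 - 2\lambda_n + \kappa)$ to zero. By Vieta the second root equals $\rho\,\|u_n\|$ with $\rho = (2\lambda_n - 1 - \kappa)/(1-\kappa)$, and $\rho < 1$ holds precisely because $\lambda_n < 1$. The upward-opening parabola is non-positive only on the segment between its two roots, so $\|u_{n+1}\| \ge 0$ must lie in that segment, whose right endpoint is $\|u_n\|$; hence $\|u_{n+1}\| \le \|u_n\|$. (The degenerate case $u_n = 0$ gives $x_{n+1} = x_n$ and hence $u_{n+1} = 0$, needing no separate argument.) The main obstacle I anticipate is spotting this factorisation: without recognising that $\|u_n\|$ is always a root of the Cauchy--Schwarz quadratic, the bound one extracts is not evidently sharp, and the specific role of the hypothesis $\lambda_n < 1$ (as opposed to, say, a restriction $\lambda_n \le 1 - \kappa$) would remain opaque.
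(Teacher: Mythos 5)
Your argument is correct. The paper itself gives no proof of this lemma --- it is imported from \cite{MarXu07} --- and what you have written is essentially a reconstruction of the argument in that reference: apply the defining inequality \eqref{def-k-strict-pseudo-contraction} to the consecutive iterates, express everything in terms of $u_n$ and $u_{n+1}$, bound the cross term by Cauchy--Schwarz, and exploit the fact that $\|u_n\|$ is a root of the resulting upward-opening quadratic. The algebra checks out: with $x_{n+1}-x_n=-(1-\lambda_n)u_n$ and $Tx_{n+1}-Tx_n=\lambda_n u_n-u_{n+1}$ one does land on $(1-\kappa)\|u_{n+1}\|^2\le(1-2\lambda_n+\kappa)\|u_n\|^2+2(\lambda_n-\kappa)\lan u_n,u_{n+1}\ran$, and the second root of the Cauchy--Schwarz quadratic is $\rho\|u_n\|$ with $\rho=(2\lambda_n-1-\kappa)/(1-\kappa)<1$ exactly when $\lambda_n<1$. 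One remark worth recording: your use of $\lambda_n\ge\kappa$ at the Cauchy--Schwarz step is not cosmetic --- the lemma is actually false for $\lambda_n<\kappa$ (take $Tx=-2x$ on $\mathbb{R}$, which is a $\tfrac13$-strict pseudo-contraction, with constant $\lambda_n<\tfrac13$; then $\|x_n-Tx_n\|=3|x_n|$ grows geometrically). So the lemma silently relies on the hypothesis $\kappa<\lambda_n<1$ from \eqref{lambdan-cond}, not merely on the blanket assumption $\lambda_n\in(0,1)$ made at the start of Section 3, and your proof correctly identifies where each half of that hypothesis enters.
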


\begin{lemma}\label{afp-b-upper-bound}
Let $y\in C$ and $b\geq \max\{\|x-Tx\|, \|x-y\|\}$ and $c\geq \|y-Ty\|$. Then for all $n\geq 0$, 
\be
\item\label{afp-b-upper-bound-i} $\|x_n-y\|\leq (n+1)b$ and $\|Tx_n-y\|\leq (n+2)b$.
\item\label{afp-b-upper-bound-ii} $\|x_{n+1} - y\|^2 \leq \|x_n-y\|^2-
(\lambda_n-\kappa) (1-\lambda_n)\|x_n - Tx_n\|^2+2\big((n+1)b+2c\big)\|y-Ty\|$.
\ee
\end{lemma}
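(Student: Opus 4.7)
The plan is to prove (i) by induction on $n$ and then to derive (ii) by substituting the bound from (i) into Lemma \ref{lemma-ineq}(ii). There is no real obstacle; the only ingredient one has to remember to invoke is the monotonicity lemma \ref{xnTxn-nonincreasing} in order to control $\|Tx_n - y\|$ through $\|x_n - Tx_n\|$.

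For (i), the base case $n=0$ reads $\|x_0 - y\| = \|x - y\| \leq b$ and $\|Tx_0 - y\| \leq \|Tx - x\| + \|x - y\| \leq 2b$, both of which follow directly from the hypotheses on $b$. For the inductive step, I would use the convexity expression $x_{n+1} = \lambda_n x_n + (1-\lambda_n) T x_n$ together with the triangle inequality to obtain
\[
\|x_{n+1} - y\| \leq \lambda_n \|x_n - y\| + (1-\lambda_n)\|Tx_n - y\| \leq \lambda_n (n+1) b + (1-\lambda_n)(n+2)b \leq (n+2)b,
\]
which is exactly the required bound on $\|x_{n+1} - y\|$. To then get $\|Tx_{n+1} - y\| \leq (n+3)b$, I would apply the triangle inequality one more time and use Lemma \ref{xnTxn-nonincreasing}, which gives $\|x_{n+1} - Tx_{n+1}\| \leq \|x_0 - Tx_0\| = \|x - Tx\| \leq b$; combining with $\|x_{n+1} - y\| \leq (n+2)b$ yields $\|Tx_{n+1} - y\| \leq b + (n+2)b = (n+3)b$.

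For (ii), the starting point is Lemma \ref{lemma-ineq}(\ref{xn1y-xny-xnTxn}), which already states
\[
\|x_{n+1} - y\|^2 \leq \|x_n - y\|^2 - (\lambda_n - \kappa)(1-\lambda_n)\|x_n - Tx_n\|^2 + 2\|y - Ty\|\bigl(\|x_n - y\| + 2\|y - Ty\|\bigr).
\]
Using part (i) to bound $\|x_n - y\| \leq (n+1)b$ and the hypothesis $\|y - Ty\| \leq c$ inside the parenthesis (but keeping one factor of $\|y - Ty\|$ outside, as in the statement), one immediately obtains
\[
\|x_n - y\| + 2\|y - Ty\| \leq (n+1)b + 2c,
\]
from which the desired inequality follows by direct substitution. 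This concludes the proof.
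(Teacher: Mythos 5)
Your proof is correct and follows essentially the same route as the paper: induction on $n$ for part (i), using the convex-combination bound $\|x_{n+1}-y\|\leq \lambda_n\|x_n-y\|+(1-\lambda_n)\|Tx_n-y\|$ together with the monotonicity of $(\|x_n-Tx_n\|)$ from Lemma \ref{xnTxn-nonincreasing}, and then part (ii) by plugging the bound from (i) and $\|y-Ty\|\leq c$ into Lemma \ref{lemma-ineq}.(\ref{xn1y-xny-xnTxn}). Nothing is missing.
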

\begin{proof}
\be
\item 
By induction on $n$, taking into account that, for all $n$, we have that  $\|x_{n+1}-y\|\leq 
\lambda_n\|x_n-y\|+(1-\lambda_n)\|Tx_n-y\|$ and that 
$\|x_n-Tx_n\|\leq \|x-Tx\| \leq b$ , by Lemma \ref{xnTxn-nonincreasing}.
\details{$n=0$: $\|x_0-y\|=\|x-y\|\leq b$ and $\|Tx_0-y\|=\|Tx-y\|\leq \|Tx-x\|+\|x-y\|\leq 2b$.
$n\Ra n+1$: $\|x_{n+1}-y\|\leq \lambda_n\|x_n-y\|+(1-\lambda_n)\|Tx_n-y\|\leq \lambda_nnb+
(1-\lambda_n)(n+1)b\leq \lambda_n(n+1)b+(1-\lambda_n)(n+1)b=(n+1)b$ and
 $\|Tx_{n+1}-y\|\leq \|Tx_{n+1}-x_{n+1}\|+\|x_{n+1}-y\|\leq \|x-Tx\|+(n+1)b\leq (n+2)b$.}
 \item Apply \eqref{afp-b-upper-bound-i} and Lemma \ref{lemma-ineq}.\eqref{xn1y-xny-xnTxn}.
\ee
\end{proof}

\section{Proof of Theorem \ref{main-quant}}\label{proof-main-quant}

Let us denote, for simplicity, 
$\ds\Delta:=\sum_{n=0}^{\Phi} (\lambda_n-\kappa) (1-\lambda_n)\|x_n - Tx_n\|^2$. \\

{\bf Claim:} $\Delta\le b^2$.

{\bf Proof of claim:} We prove that $\Delta\le b^2+\sigma$ for all $\sigma\in(0,1)$. 
Apply the fact that $T$ has approximate fixed points in a $b$-neighborhood of $x$, and we get for 
$$\delta:=\frac{\sigma}{(\Phi+1)(\Phi b+2b+2)}$$
an $y\in C$ such that $\|x-y\|\le b$ and $\ds \|y-Ty\| <\delta <\frac12$. We can apply 
Lemma \ref{afp-b-upper-bound}.\eqref{afp-b-upper-bound-ii} with 
$b$ as in the hypothesis and $\ds c:=1/2$ to obtain
\begin{align*}
\Delta\leq & \sum_{n=0}^{\Phi} 
(\|x_{n}-y\|^2-\|x_{n+1}-y\|^2)+2\sum_{n=0}^{\Phi} \big((n+1)b+1\big)\|y-Ty\|\\
= & \|x_0-y\|^2-\|x_{\Phi+1}-y\|^2+2\|y-Ty\|\left(\frac{(\Phi+1)(\Phi+2)b}2+(\Phi+1)\right)\\
\leq & \|x-y\|^2+\|y-Ty\|(\Phi+1)(\Phi b+2b+2) < b^2+\sigma.  
\end{align*}
The claim is proved.  

Since $(\|x_n-Tx_n\|)$ is nonincreasing, it  is enough to prove that there exists $N\leq \Phi$ 
such that $\|x_N - Tx_N\|\leq \eps$. Assume by contradiction that for all $n=0,\ldots, 
\Phi$ one has $\|x_n-Tx_n\| >\eps$. It follows that 
\begin{align*}
\Delta=& \sum_{n=0}^{\Phi} (\lambda_n-\kappa) (1-\lambda_n)\|x_n - Tx_n\|^2 >  \sum_{n=0}^{\Phi} (\lambda_n-\kappa) (1-\lambda_n)
\eps^2 \\
= & \eps^2\sum_{n=0}^{\theta(\lceil b^2/\eps^2\rceil)} (\lambda_n-\kappa) (1-\lambda_n) \geq   \left\lceil\frac{b^2}{\eps^2}\right\rceil\cdot \eps^2= b^2.
\end{align*}
Thus, we have got a contradiction. \hfill $\Box$

\mbox{ } 

\noindent
{\bf Acknowledgements:} \\[1mm]
This work was supported by a grant of the Romanian National Authority for Scientific Research, CNCS - UEFISCDI, project 
number PN-II-ID-PCE-2011-3-0383.

\end{document}